\theoremstyle{plain}
\newtheorem{theorem}{Theorem}[section]
\newtheorem{proposition}[theorem]{Proposition}
\theoremstyle{definition}
\newtheorem{definition}[theorem]{Definition}
\newtheorem{corollary}[theorem]{Corollary}
\newtheorem{example}{\sc Example}
\theoremstyle{remark}
\newtheorem{remark}{\sc Remark}
\date{}
\title{\bf On $Q$-Fuzzy Ideal Extensions in Semigroups}\vspace{.25 in}
\author{ {\bf Samit Kumar Majumder}\\
Tarangapur N.K High School, Tarangapur\\
Uttar Dinajpur, West Bengal-733 129, INDIA\\
{\tt samitfuzzy@gmail.com}
 }
\begin{document}
\maketitle

\begin{abstract}

In this paper the concept of
extension of a $Q$-fuzzy ideal in semigroups has been introduced and some important properties have been studied.\\
\textbf{AMS Mathematics Subject Classification(2000):}\textit{\
}04A72, 20M12

\textbf{Key Words and Phrases:}\textit{\ }Semigroup, Fuzzy set,
$Q$-fuzzy set, $Q$-fuzzy ideal, $Q$-fuzzy completely
prime$(Q$-fuzzy completely semiprime$)$ ideal, $Q$-fuzzy ideal extension.
\end{abstract}

\section{Introduction}
A semigroup is an algebraic structure consisting of a non-empty
set $S$ together with an associative binary operation\cite{H}. The
formal study of semigroups began in the early 20th century.
Semigroups are important in many areas of mathematics, for
example, coding and language theory, automata theory,
combinatorics and mathematical analysis. The concept of fuzzy sets
was introduced by \textit{Lofti Zadeh}\cite{Z} in his classic
paper in 1965. \textit{Azirel Rosenfeld}\cite{R} used the idea of
fuzzy set to introduce the notions of fuzzy subgroups.
\textit{Nobuaki Kuroki}\cite{K1,K2,K3} is the pioneer of fuzzy
ideal theory of semigroups. The idea of fuzzy subsemigroup was
also introduced by \textit{Kuroki}\cite{K1,K3}. In \cite{K2},
\textit{Kuroki} characterized several classes of semigroups in
terms of fuzzy left, fuzzy right and fuzzy bi-ideals. Others who
worked on fuzzy semigroup theory, such as \textit{X.Y.
Xie}\cite{X1,X2}, \textit{Y.B. Jun}\cite{J}, are mentioned in the
bibliography. \textit{X.Y. Xie}\cite{X1} introduced the idea of
extensions of fuzzy ideals in semigroups. \textit{K.H.
Kim}\cite{K} studied intuitionistic $Q$-fuzzy ideals in
semigroups. In this paper the notion of extension of $Q$-fuzzy
ideals in semigroups has been introduced and some important
properties have been investigated.
\section{Preliminaries}

In this section we discuss some elementary definitions that we use
in the sequel.\\

\begin{definition}
\cite{Mo} If $(S,\ast)$ is a mathematical system such that
$\forall a,b,c\in S,$ $(a\ast b)\ast c=a\ast(b\ast c),$ then
$\ast$ is called associative and $(S,\ast)$ is called a
\textit{semigroup}.
\end{definition}

\begin{definition}
\cite{Mo} A \textit{semigroup} $(S,\ast)$ is said to be
commutative if for all $a,b\in S,$ $a\ast b=b\ast a.$
\end{definition}

\begin{definition}
\cite{Mo} A semigroup $S$ is said to be \textit{left $($right$)$
regular} if, for each element $a$ of $S$, there exists an element
$x$ in $S$ such that $a=xa^{2}($resp. $a=a^{2}x).$
\end{definition}

\begin{definition}
\cite{Mo} A semigroup $S$ is called \textit{intra-regular} if for
each element $a$ of $S,$ there exist elements $x,y\in S$ such that
$a=xa^{2}y.$
\end{definition}

\begin{definition}
\cite{Mo} A semigroup $S$ is called \textit{regular} if for each
element $a$ of $S,$ there exists an element $x\in S$ such that
$a=axa.$
\end{definition}

\begin{definition}
\cite{Mo} A semigroup $S$ is called \textit{archimedean} if for
all $a,b\in S,$ there exists a positive integer $n$ such that
$a^{n}\in SbS.$
\end{definition}

Throughout the paper unless otherwise stated $S$ will denote a
semigroup.\\

\begin{definition}
\cite{Mo} A {\it subsemigroup} of a semigroup $S$ is a non-empty
subset $I$ of $S$ such that $I^{2} \subseteq I.$
\end{definition}

\begin{definition}
\cite{Mo} A subsemigroup $I$ of a semigroup $S$ is a called an
{\it interior ideal} of $S$ if $SIS\subseteq I.$
\end{definition}

\begin{definition}
\cite{Mo} A {\it left} ({\it right}){\it ideal} of a semigroup $S$
is a non-empty subset $I$ of $S$ such that $SI \subseteq I$ ($IS
\subseteq I$). If $I$ is both a left and a right ideal of a
semigroup $S$, then we say that $I$ is an {\it ideal} of $S$.
\end{definition}

\begin{definition}
\cite{Mo} Let $S$ be a semigroup. Then an ideal $I$ of $S$ is said
to be $(i)$ {\it completely prime} if $xy\in I$ implies that $x\in I$ or $y\in
I\forall x,y\in S,(ii)$ {\it completely semiprime} if $x^{2}\in I$ implies that
$x\in I,\forall x\in S$.
\end{definition}

\begin{definition}
\cite{Z} A {\it fuzzy subset} of a non-empty set $X$ is a function
$\mu:X\rightarrow [0,1].$
\end{definition}

\begin{definition}
\cite{S} Let $\mu$ be a fuzzy subset of a set $X,$ $\alpha
\in\lbrack 0,1-\sup \{\mu(x):x\in X\}].$ Then $\mu^{T}_{\alpha}$
is called a \textit{fuzzy translation} of $\mu$ if $\mu_{\alpha
}^{T}(x)=\mu(x)+\alpha\forall x\in X$.
\end{definition}

\begin{definition}
\cite{S} Let $\mu$ be a fuzzy subset of a set $X,$ $\beta\in
[0,1].$ Then $\mu^{M}_{\beta}$ is called a \textit{fuzzy
multiplication} of $\mu$ if $\mu_{\beta}^{M}(x)=\beta\cdot\mu(x)\forall x\in X$.
\end{definition}

\begin{definition}
\cite{S} Let $\mu$ be a fuzzy subset of a set $X,$ $\alpha
\in\lbrack 0,1-\sup \{\mu(x):x\in X\}],$ where $\beta\in [0,1].$
Then $\mu^{C}_{\beta\alpha}$ is called a \textit{fuzzy magnified
translation} of $\mu$ if $\mu_{\beta\alpha
}^{C}(x)=\beta\cdot\mu(x)+\alpha\forall x\in X$.
\end{definition}

\begin{definition}
Let $Q$ and $X$ be two non-empty sets. A mapping $\mu:X\times
Q\rightarrow [0,1]$ is called the {\it $Q$-fuzzy subset} of $X.$
\end{definition}

\begin{definition}
Let $\mu$ be a $Q$-fuzzy subset of a non-empty set $X.$ Then the
set $\mu_{t}=\{x\in X:\mu(x,q)\geq t\forall q\in Q\}$ for $t\in
[0,1],$ is called the {\it level subset} or {\it $t$-level subset} of $\mu.$
\end{definition}

\begin{example}
Let $S=\{a,b,c\}$ and $\ast$ be a binary operation on $S$ defined
in the following caley table:

\ \ \ \ \ \ \ \ \ \ \ \ \ \ \ \ \ \ \ \ \ \ \ \ \ \ \ \ \ \ \ \ \
\ \ \ \ \ $
\begin{tabular}{|c|c|c|c|}
\hline $\ast $ & $a$ & $b$ & $c$ \\ \hline $a$ & $a$ & $a$ & $a$
\\ \hline $b$ & $b$ & $b$ & $b$ \\ \hline $c$ & $c$ & $c$ & $c$ \\
\hline
\end{tabular}%
$\\

Then $S$ is a semigroup. Let $Q=\{p\}.$ Let us consider a
$Q$-fuzzy subset $\mu: S\times Q\rightarrow [0,1],$ by
$\mu(a,p)=0.8,\mu(b,p)=0.7,\mu(c,p)=0.6.$ For $t=0.7,$
$\mu_{t}=\{a,b\}.$
\end{example}

\section{$Q$-Fuzzy Ideals}

\begin{definition}
A non-empty $Q$-fuzzy subset of a semigroup $S$ is called a
\textit{$Q$-fuzzy subsemigroup} of $S$ if
$\mu(xy,q)\geq\min\{\mu(x,q),\mu(y,q)\}$ $\forall x,y\in S,\forall
q\in Q.$
\end{definition}

\begin{definition}
A $Q$-fuzzy subsemigroup $\mu$ of a semigroup $S$ is called a
\textit{$Q$-fuzzy interior ideal} of $S$ if
$\mu(xay,q)\geq\mu(a,q)\forall x,a,y\in S,\forall q\in Q.$
\end{definition}

\begin{definition}
A non-empty $Q$-fuzzy subset $\mu$ of a semigroup $S$ is called a
\textit{$Q$-fuzzy left$($right$)$ ideal} of $S$ if
$\mu(xy,q)\geq\mu(y,q)($resp. $\mu(xy,q)\geq\mu(x,q))$\ $\forall
x,y\in S,\forall q\in Q.$
\end{definition}

\begin{definition}
A non-empty $Q$-fuzzy subset $\mu$ of a semigroup $S$ is called a
\textit{$Q$-fuzzy two-sided ideal} or a \textit{$Q$-fuzzy ideal}
of $S$ if it is both a $Q$-fuzzy left and a $Q$-fuzzy right ideal
of $S.$
\end{definition}

\begin{definition}
A $Q$-fuzzy ideal $\mu$ of a semigroup $S$ is called a
\textit{$Q$-fuzzy completely prime ideal} of $S$ if
$\mu(xy,q)=\max\{\mu(x,q),\mu(y,q)\}\forall x,y\in S,\forall q\in
Q.$
\end{definition}

\begin{definition}
A $Q$-fuzzy ideal $\mu$ of a semigroup $S$ is called a
\textit{$Q$-fuzzy completely semiprime ideal} of $S$ if
$\mu(x,q)\geq\mu(x^{2},q)\forall x\in S,\forall q\in Q.$
\end{definition}

\begin{theorem}
Let $I$ be a non-empty subset of a semigroup $S$ and
$\chi_{I\times Q}$ be the characteristic function of $I\times Q,$
where $Q$ is any non-empty set. Then $I$ is a left ideal$($right
ideal, ideal, completely prime ideal, completely semiprime$)$ of $S$ if and only if
$\chi_{I\times Q}$ is a $Q$-fuzzy left ideal$($resp. $Q$-fuzzy
right ideal, $Q$-fuzzy ideal, $Q$-fuzzy completely prime ideal,
$Q$-fuzzy completely semiprime ideal$)$ of $S$.
\end{theorem}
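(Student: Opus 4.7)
The plan is to prove each of the five equivalences separately, using the fact that $\chi_{I\times Q}(x,q)$ only takes the values $0$ and $1$, with value $1$ iff $x\in I$ (the $Q$-coordinate being irrelevant since it ranges over all of $Q$). This reduces every verification to a simple two-case split on whether the relevant element lies in $I$, in which case the characteristic value is $1$, or not, in which case it is $0$ and any ``$\geq$'' inequality is trivially satisfied.

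For the left-ideal equivalence, I would argue as follows. For the forward direction, assume $I$ is a left ideal; to show $\chi_{I\times Q}(xy,q)\geq\chi_{I\times Q}(y,q)$, consider the two cases: if $y\in I$ then $xy\in SI\subseteq I$ and both sides equal $1$, and if $y\notin I$ the right side is $0$, so the inequality is automatic. For the converse, assume $\chi_{I\times Q}$ is a $Q$-fuzzy left ideal; for any $x\in S$, $y\in I$, pick any $q\in Q$ and use $\chi_{I\times Q}(xy,q)\geq\chi_{I\times Q}(y,q)=1$ to conclude $xy\in I$, hence $SI\subseteq I$. The right-ideal case is symmetric, and the two-sided case is the conjunction of the two.

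For the completely prime equivalence, I would first note that a completely prime $Q$-fuzzy ideal is in particular a $Q$-fuzzy ideal, so the underlying ideal property is already handled. For the forward direction, given $x,y\in S$: if either lies in $I$, then by the (two-sided) ideal property $xy\in I$ and the $\max$ equality reads $1=1$; if neither lies in $I$, completely primeness forces $xy\notin I$, and the equality reads $0=0$. For the converse, if $xy\in I$ then $\chi_{I\times Q}(xy,q)=1=\max\{\chi_{I\times Q}(x,q),\chi_{I\times Q}(y,q)\}$, so at least one characteristic value is $1$, giving $x\in I$ or $y\in I$.

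For the completely semiprime equivalence, the same template works on the single-variable inequality $\chi_{I\times Q}(x,q)\geq\chi_{I\times Q}(x^2,q)$: the forward direction splits on whether $x^2\in I$, and the converse observes that $x^2\in I$ forces $\chi_{I\times Q}(x,q)\geq 1$, i.e., $x\in I$. No single step is a real obstacle; the only thing to be careful about is consistently writing the inequalities/equalities for arbitrary $q\in Q$ and checking that the $Q$-coordinate plays no role (because $\chi_{I\times Q}(x,q)$ depends only on $x$), so that the classical ideal conditions on $I$ and the $Q$-fuzzy conditions on $\chi_{I\times Q}$ translate into each other without loss.
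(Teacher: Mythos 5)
Your proposal is correct and uses the same $0/1$ case analysis on membership in $I$ that the paper uses; the paper writes out only the left-ideal case and leaves the rest as ``similarly,'' while you sketch the completely prime and completely semiprime cases explicitly, but the underlying argument is identical.
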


\begin{proof}
Let $I$ be a left ideal of a semigroup $S.$ Let $x,y\in S,q\in Q$
, then $xy\in I$ if $y\in I.$ It follows that $\chi_{I\times
Q}(xy,q)=1=\chi_{I\times Q}(y,q).$ If $y\notin I,$ then
$\chi_{I\times Q}(y,q)=0.$ In this case $\chi_{I\times
Q}(xy,q)\geq0=\chi_{I\times Q}(y,q)$ $.$ Therefore $\chi_{I\times
Q}$ is a $Q$-fuzzy left ideal of $S.$

Conversely, let $\chi_{I\times Q}$ be a $Q$-fuzzy left ideal of
$S.$ Let $x,y\in I,q\in Q,$ then $\chi_{I\times
Q}(x,q)=\chi_{I\times Q}(y,q)=1.$ Now let $x\in I$ and $s\in
S,q\in Q.$ Then $\chi_{I\times Q}(x,q)=1.$ Also $\chi_{I\times
Q}(sx,q)\geq\chi_{I\times Q}(x,q)=1.$ Thus $sx\in I.$ So $I$ is a
left ideal of $S.$ Similarly we can prove that the other parts of
the theorem.
\end{proof}

\begin{theorem}
Let $S$ be a semigroup, $Q$ be any non-empty set and $\mu$ be a
non-empty $Q$-fuzzy subset of $S,$ then $\mu$ is a $Q$-fuzzy left
ideal$(Q$-fuzzy right ideal, $Q$-fuzzy ideal, $Q$-fuzzy completely
prime ideal, $Q$-fuzzy completely semiprime ideal$)$ of $S$ if and
only if $\mu_{t}$'s are left ideals$($resp. right ideals, ideals,
completely prime ideals, completely semiprime ideals$)$ of $S$ for all $t\in Im(\mu),$
where $\mu_{t}=\{x\in S:\mu(x,q)\geq t\forall q\in Q\}.$
\end{theorem}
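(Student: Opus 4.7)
The plan is to prove each of the five claims by showing that the $Q$-fuzzy version of the relevant ideal property holds for $\mu$ if and only if the ordinary set-theoretic version holds for every non-empty level subset $\mu_{t}$ with $t\in\textrm{Im}(\mu)$. Since the five arguments are structurally parallel, I would carry out the left-ideal case in detail and indicate that the right-ideal, two-sided-ideal, completely prime and completely semiprime cases follow by the same template.

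For the forward direction in the left-ideal case, assume $\mu$ is a $Q$-fuzzy left ideal, fix $t\in\textrm{Im}(\mu)$, and take $x\in\mu_{t}$ together with an arbitrary $s\in S$. For every $q\in Q$ the defining inequality gives $\mu(sx,q)\geq\mu(x,q)\geq t$, so $sx\in\mu_{t}$ and $\mu_{t}$ is a left ideal. For the converse I would argue by contradiction: if every $\mu_{t}$ is a left ideal but $\mu$ is not a $Q$-fuzzy left ideal, there exist $x_{0},y_{0}\in S$ and $q_{0}\in Q$ with $\mu(x_{0}y_{0},q_{0})<\mu(y_{0},q_{0})$. Setting $t_{0}=\mu(y_{0},q_{0})\in\textrm{Im}(\mu)$, I would conclude $y_{0}\in\mu_{t_{0}}$ while $x_{0}y_{0}\notin\mu_{t_{0}}$, contradicting that $\mu_{t_{0}}$ is a left ideal.

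The right-ideal and two-sided-ideal cases follow identically, the two-sided case by combining the two one-sided arguments. For the completely prime case, the forward direction uses the equality $\mu(xy,q)=\max\{\mu(x,q),\mu(y,q)\}$ to deduce that $xy\in\mu_{t}$ forces $x\in\mu_{t}$ or $y\in\mu_{t}$; the converse first applies the ideal case to obtain $\mu(xy,q)\geq\max\{\mu(x,q),\mu(y,q)\}$ and then rules out strict inequality by extracting a witness triple $(x_{0},y_{0},q_{0})$ and taking $t_{0}=\max\{\mu(x_{0},q_{0}),\mu(y_{0},q_{0})\}\in\textrm{Im}(\mu)$, which places $x_{0}$ or $y_{0}$ in $\mu_{t_{0}}$ while keeping $x_{0}y_{0}$ outside. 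The completely semiprime case follows the same pattern with $x^{2}$ in place of $xy$, using $t_{0}=\mu(x_{0}^{2},q_{0})$. The main delicate point throughout is choosing $t_{0}$ so that it both lies in $\textrm{Im}(\mu)$ and separates the witness from the level subset, but by construction each $t_{0}$ is a value of $\mu$ at an explicit point, so membership in the image is automatic.
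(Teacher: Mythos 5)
Your forward direction matches the paper's argument and is fine as far as closure under left multiplication goes, though note that the paper's definition of a left ideal requires non-emptiness, and neither you nor, rigorously, the paper establishes $\mu_t\neq\emptyset$: $t\in \mathrm{Im}(\mu)$ only gives $\mu(\alpha,q_0)=t$ for \emph{one} $q_0$, while membership in $\mu_t$ demands $\mu(\alpha,q)\geq t$ for \emph{all} $q\in Q$. The genuine gap, however, is in your converse. From the witness $\mu(x_0y_0,q_0)<\mu(y_0,q_0)$ you set $t_0=\mu(y_0,q_0)$ and conclude $y_0\in\mu_{t_0}$. That inference is invalid: $y_0\in\mu_{t_0}$ requires $\mu(y_0,q)\geq t_0$ for every $q\in Q$, and nothing forces $\mu(y_0,q)\geq\mu(y_0,q_0)$ when $q\neq q_0$. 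The ``delicate point'' you flag at the end is exactly this one, and membership of the witness in the level set is not automatic; the same objection applies to your choices of $t_0$ in the completely prime and completely semiprime cases.

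Moreover the gap cannot be patched, because with the stated definition of $\mu_t$ the converse is false once $Q$ has at least two elements. Take $S=(\mathbb{N},+)$, $Q=\{q_1,q_2\}$, and define $\mu(1,q_1)=0.8$, $\mu(1,q_2)=0.2$, $\mu(2,q_1)=\mu(2,q_2)=0.5$, and $\mu(n,q_1)=\mu(n,q_2)=0.9$ for $n\geq 3$. Every level set at an image value is a left ideal ($\mu_{0.2}=\mathbb{N}$, $\mu_{0.5}=\{2,3,\dots\}$, $\mu_{0.8}=\mu_{0.9}=\{3,4,\dots\}$), yet $\mu(1+1,q_1)=0.5<0.8=\mu(1,q_1)$, so $\mu$ is not a $Q$-fuzzy left ideal. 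The paper's own converse quietly restricts attention to elements $x$ with $\mu(x,q)=t$ for all $q\in Q$, so it verifies the defining inequality only at such points and has the same hole. A correct statement would either take $|Q|=1$ or define the level sets per parameter, $\mu_t^{q}=\{x\in S:\mu(x,q)\geq t\}$, and quantify over those; with that modification your contradiction argument goes through verbatim and is, if anything, cleaner than the paper's direct argument.
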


\begin{proof}
Let $\mu$ be a $Q$-fuzzy left ideal of $S.$ Let
$t\in\operatorname{Im}\mu,$ then there exist some $\alpha\in S$
and $q\in Q$ such that $\mu(\alpha,q)=t$ and so
$\alpha\in\mu_{t.}$ Thus $\mu_{t}\neq\phi.$ Again let $s\in S$ and
$x\in\mu_{t}.$ Now $\mu(sx,q)\geq\mu(x,q)\geq t.$ Therefore
$sx\in\mu_{t}.$ Thus $\mu_{t}$ is a left ideal of $S.$

Conversely, let $\mu_{t}$'s are left ideals of $S$ for all
$t\in\operatorname{Im}\mu.$ Again let $x,s\in S$ be such that
$\mu(x,q)=t\forall q\in Q.$ Then $x\in\mu_{t}.$ Thus
$sx\in\mu_{t}$ $($since $\mu_{t}$ is a left ideal of $S).$
Therefore $\mu(sx,q)\geq t=\mu(x,q).$ Hence $\mu$ is a $Q$-fuzzy
left ideal of $S.$ Similarly we can prove other parts of the
theorem.
\end{proof}

\section{$Q$-Fuzzy Ideal Extensions}

\begin{definition}
Let $S$ be a semigroup, $\mu$ be a $Q$-fuzzy subset of $S$ and
$x\in S.$ The $Q$-fuzzy subset $<x,\mu>$ where $<x,\mu>:S\times
Q\rightarrow [0,1]$ is defined by $<x,\mu>(y,q):=\mu(xy,q)\forall
y\in S,\forall q\in Q$ is called the \textit{$Q$-fuzzy extension}
of $\mu$ by $x.$
\end{definition}

\begin{example} Let $X=\{1,\omega,\omega^{2}\}$ and $Q=\{p\}.$ Let $\mu$ be a $Q$-fuzzy subset of $X,$ defined
as follows%
\begin{align*}
\mu(x,p)=\left\{
\begin{array}{ll}
0.3 & \text{if} \ x=1 \\
0.1 & \text{if} \ x=\omega \\
0.5 & \text{if} \ x=\omega^{2}
\end{array}
\right..
\end{align*}

Let $x=\omega.$ Then the $Q$-fuzzy extension of $\mu$ by $\omega$ is given by
\begin{align*}
<x,\mu>(y,p)=\left\{
\begin{array}{ll}
0.1 & \text{if} \ y=1 \\
0.5 & \text{if} \ y=\omega \\
0.3 & \text{if} \ y=\omega^{2}
\end{array}
\right..
\end{align*}
\end{example}

\begin{proposition}
Let $S$ be a commutative semigroup, $Q$ be any non-empty set and
$\mu$ be a $Q$-fuzzy ideal of $S.$ Then $<x,\mu>$ is a $Q$-fuzzy
ideal of $S\forall x\in S.$
\end{proposition}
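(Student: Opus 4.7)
My plan is to unfold the definition of $\langle x,\mu\rangle$ and check the two defining inequalities of a $Q$-fuzzy ideal (the left ideal condition and the right ideal condition) directly. Since $\langle x,\mu\rangle(y,q) = \mu(xy,q)$, everything reduces to statements about $\mu$ evaluated at products of three elements of $S$, and the only tools in play are associativity, commutativity of $S$, and the hypothesis that $\mu$ is both a $Q$-fuzzy left and right ideal.

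For the right ideal condition I would fix $y,z \in S$ and $q \in Q$ and compute $\langle x,\mu\rangle(yz,q) = \mu(x(yz),q) = \mu((xy)z,q)$. Then, since $\mu$ is a $Q$-fuzzy right ideal of $S$, $\mu((xy)z,q) \geq \mu(xy,q) = \langle x,\mu\rangle(y,q)$, as required. For the left ideal condition I would use commutativity: $\langle x,\mu\rangle(yz,q) = \mu(xyz,q) = \mu(y(xz),q)$, and since $\mu$ is a $Q$-fuzzy left ideal, $\mu(y(xz),q) \geq \mu(xz,q) = \langle x,\mu\rangle(z,q)$.

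Finally I would note that $\langle x,\mu\rangle$ is non-empty as a $Q$-fuzzy subset since $\mu$ is (this is really just the observation that $\langle x,\mu\rangle$ is a well-defined function into $[0,1]$), so the two inequalities together show that $\langle x,\mu\rangle$ is a $Q$-fuzzy ideal of $S$.

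There is no genuine obstacle here: the statement is essentially a bookkeeping exercise, and the one place where a difficulty could arise — verifying the left ideal inequality — is exactly where commutativity is invoked, which is precisely why the hypothesis $S$ commutative is assumed. The only care needed is to apply commutativity in the correct slot (moving $x$ past $y$ inside $xyz$) rather than naively trying to use the right ideal property for both inequalities.
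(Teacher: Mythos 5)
Your proposal is correct and follows essentially the same route as the paper: the right-ideal inequality is verified by the identical computation $\langle x,\mu\rangle(yz,q)=\mu((xy)z,q)\geq\mu(xy,q)$, and your explicit check of the left-ideal inequality via $\mu(xyz,q)=\mu(y(xz),q)\geq\mu(xz,q)$ is just a spelled-out version of the paper's closing remark that commutativity upgrades a $Q$-fuzzy right ideal to a $Q$-fuzzy ideal. No issues.
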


\begin{proof}
Let $\mu$ be a $Q$-fuzzy ideal of a commutative semigroup $S$ and $y,z\in S,q\in Q.$ Then%
\begin{align*}
<x,\mu>(yz,q) & =\mu(xyz,q)\geq\mu(xy,q) =<x,\mu>(y,q)
\end{align*}

Hence $<x,\mu>$ is a $Q$-fuzzy right ideal of $S.$ Hence $S$ being
commutative, $<x,\mu>$ is a $Q$-fuzzy ideal of $S.$
\end{proof}

\begin{remark}
Commutativity of the semigroup $S$ is not required to prove that
$<x,\mu>$ is a $Q$-fuzzy right ideal of $S$ when $\mu$ is a
$Q$-fuzzy right ideal of $S.$
\end{remark}

\begin{definition}
Let $S$ be a semigroup, $Q$ be any non-empty set and $\mu$ be a $Q$-fuzzy subset of $S.$
Then we define $Supp$ $ \mu=\{x\in S:\mu(x,q)>0\forall q\in Q\}.$
\end{definition}

\begin{proposition}
Let $S$ be a semigroup, $Q$ be any non-empty set, $\mu$ be a $Q$-fuzzy ideal of $S$ and
$x\in S.$ Then we have the following:

$(i)$ $\mu\subseteq <x,\mu>.$

$(ii)$ $<x^{n},\mu>\subseteq <x^{n+1},\mu>\forall n\in N.$

$(iii)$ If $\mu(x,q)>0$ then $Supp <x,\mu>=S\times Q.$
\end{proposition}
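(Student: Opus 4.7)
The plan is to read each of (i), (ii), (iii) as an inequality about values of $\mu$ at translated arguments, and then simply invoke the defining inequalities of a $Q$-fuzzy left/right ideal. Throughout, fix $y\in S$ and $q\in Q$, and recall that by definition $\langle x,\mu\rangle(y,q)=\mu(xy,q)$.

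For (i), I want to show $\mu(y,q)\le \langle x,\mu\rangle(y,q)=\mu(xy,q)$. This is literally the $Q$-fuzzy left ideal condition applied with elements $x$ and $y$, so the inclusion follows in one line. (Note that we only use that $\mu$ is a $Q$-fuzzy left ideal here, not full two-sidedness.)

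For (ii), the claim reduces to $\mu(x^{n}y,q)\le \mu(x^{n+1}y,q)$. Writing $x^{n+1}y=x\cdot(x^{n}y)$ and again applying the $Q$-fuzzy left ideal property with the pair $(x,x^{n}y)$ gives $\mu(x\cdot x^{n}y,q)\ge \mu(x^{n}y,q)$, which is exactly what we need. No induction is really required; the statement holds for each $n$ by a single application of the left ideal inequality.

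For (iii), the hypothesis is $\mu(x,q)>0$ (for all $q\in Q$), and we want every $y\in S$ to satisfy $\langle x,\mu\rangle(y,q)=\mu(xy,q)>0$ for all $q\in Q$. Here we switch to the right ideal half of the hypothesis: $\mu(xy,q)\ge \mu(x,q)>0$. Thus every $y\in S$ lies in $\operatorname{Supp}\langle x,\mu\rangle$, giving the claimed equality with $S$ (reading the ``$S\times Q$'' in the statement as the usual notational slip, since $\operatorname{Supp}$ was defined as a subset of $S$).

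There is no real obstacle; each part is a one-step unfolding of a definition followed by the ideal inequality. The only thing to be careful about is using the correct one-sided ideal property in each part — left for (i) and (ii), right for (iii) — which is why having the full two-sided hypothesis on $\mu$ is convenient.
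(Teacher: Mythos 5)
Your proposal is correct and follows essentially the same route as the paper: each part is a one-line unfolding of $\langle x,\mu\rangle(y,q)=\mu(xy,q)$ followed by the left-ideal inequality for (i) and (ii) (with $x^{n+1}y=x\cdot x^{n}y$) and the right-ideal inequality for (iii). Your side remark that $\operatorname{Supp}$ is defined in the paper as a subset of $S$ while the statement writes $S\times Q$ is an accurate observation about a notational inconsistency, but it does not affect the argument.
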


\begin{proof}
$(i)$ Let $\mu$ be a $Q$-fuzzy ideal of $S$ and let $y\in S,q\in
Q.$ Then $<x,\mu>(y,q)=\mu(xy,q)\geq\mu(y,q).$ Hence $\mu\subseteq
<x,\mu>.$

$(ii)$ Let $\mu$ be a $Q$-fuzzy ideal of $S$ and $n\in N,y\in
S,q\in Q.$ Then
$<x^{n+1},\mu>(y,q)=\mu(x^{n+1}y,q)=\mu(xx^{n}y,q)\geq\mu(x^{n}y,q)=<x^{n},\mu>(y,q).$
Hence $<x^{n},\mu>\subseteq <x^{n+1},\mu>\forall n\in N.$

$(iii)$ Since $<x,\mu>$ is a $Q$-fuzzy subset of $S,$ so by
definition, $Supp$ $<x,\mu>\subseteq S\times Q.$ Let $\mu$ be a
$Q$-fuzzy ideal of $S$ and $y\in S,q\in Q.$ Then
$<x,\mu>(y,q)=\mu(xy,q)\geq\mu(x,q)> 0.$ Consequently, $(y,q)\in
Supp$ $<x,\mu>.$ Thus $S\times Q\subseteq Supp$ $<x,\mu>.$ Hence
$Supp<x,\mu>=S\times Q.$
\end{proof}

\begin{definition}
Let $S$ be a semigroup, $Q$ be any non-empty set, $A\subseteq S$
and $x\in S.$ We define $<x,A\times Q>=\{y\in S,q\in Q:(xy,q)\in
A\times Q\}.$
\end{definition}

\begin{proposition}
Let $S$ be a semigroup, $Q$ be any non-empty set and $\phi\neq
A\subseteq S.$ Then $<x,\mu _{A\times Q}>=\mu_{<x.A\times Q>}$ for
every $x\in S,$ where $\mu_{A\times Q}$ denotes the characteristic
function of $A\times Q.$
\end{proposition}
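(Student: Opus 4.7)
The plan is to verify the claimed equality pointwise on $S\times Q$ by unpacking the two definitions and doing a single case split on whether $xy\in A$ or not. Both sides are $\{0,1\}$-valued because they are characteristic functions (or values of a characteristic function), so the only thing to check is that they vanish on exactly the same pairs.

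First I would fix $x\in S$ and an arbitrary $(y,q)\in S\times Q$. For the left-hand side, the definition of the $Q$-fuzzy extension gives $\langle x,\mu_{A\times Q}\rangle(y,q)=\mu_{A\times Q}(xy,q)$, which by the definition of the characteristic function equals $1$ if $(xy,q)\in A\times Q$ and $0$ otherwise. Since $q\in Q$ is automatic, this is $1$ precisely when $xy\in A$.

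Next I would unpack the right-hand side. By the definition of $\langle x,A\times Q\rangle$, a pair $(y,q)$ lies in $\langle x,A\times Q\rangle$ iff $(xy,q)\in A\times Q$, i.e.\ iff $xy\in A$ (the condition $q\in Q$ again being vacuous). Hence $\mu_{\langle x,A\times Q\rangle}(y,q)=1$ when $xy\in A$ and $0$ otherwise, matching the left-hand side value in both cases. Since $(y,q)$ was arbitrary, the two $Q$-fuzzy subsets agree on all of $S\times Q$.

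There is really no substantive obstacle here; the whole argument is bookkeeping with definitions. The only mildly tricky point is reading the notation $\langle x,A\times Q\rangle$ correctly: despite the name it is a subset of $S\times Q$, not of $S$, and the $q$-coordinate plays no role in the defining condition. Once that is noted, the chain of equivalences $xy\in A \iff (xy,q)\in A\times Q \iff (y,q)\in\langle x,A\times Q\rangle$ finishes the proof immediately.
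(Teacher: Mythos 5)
Your proposal is correct and follows essentially the same route as the paper: fix $(y,q)$, unpack both definitions, and split into the two cases $xy\in A$ and $xy\notin A$ (equivalently $(y,q)\in\langle x,A\times Q\rangle$ or not), checking that both sides take the same value in $\{0,1\}$. Your remark that the $q$-coordinate is vacuous in the defining condition is a helpful clarification but does not change the argument.
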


\begin{proof}
Let $\ x,y\in S,q\in Q.$  Then two cases may arise viz. Case
$(i):$ $(y,q)\in<x,A\times Q>.$ Case $(ii):$
$(y,q)\notin<x,A\times Q>$.

Case $(i):$ $(y,q)\in<x,A\times Q>.$ Then $(xy,q)\in A\times Q.$
This means that $\mu_{A\times Q}(xy,q)=1$ whence $<x,\mu_{A\times
Q}>(y,q)=1.$ Also $\mu_{<x,A\times Q>}(y,q)=1.$

Case $(ii):$ $(y,q)\notin<x,A\times Q>$. Then $(xy,q)\notin
A\times Q$. So $\mu_{A\times Q}(xy,q)=0.$ Thus $<x,\mu_{A\times
Q}>(y,q)=0.$ Again $\mu_{<x,A\times Q>}(y,q)=0.$ Hence we conclude
$<x,\mu_{A\times Q}>=\mu_{<x,A\times Q>}.$
\end{proof}

\begin{proposition}
Let $S$ be a commutative semigroup, $Q$ be any non-empty set and
$\mu$ be a $Q$-fuzzy completely prime ideal of $S.$ Then $<x,\mu>$
is a $Q$-fuzzy completely prime ideal of $S\forall x\in S.$
\end{proposition}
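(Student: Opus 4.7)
The plan is to verify the two defining properties of a $Q$-fuzzy completely prime ideal for $\langle x,\mu\rangle$: that it is a $Q$-fuzzy ideal, and that it satisfies $\langle x,\mu\rangle(yz,q)=\max\{\langle x,\mu\rangle(y,q),\langle x,\mu\rangle(z,q)\}$ for all $y,z\in S$ and $q\in Q$. Since every $Q$-fuzzy completely prime ideal is in particular a $Q$-fuzzy ideal, the first property is immediate from the earlier proposition: $S$ being commutative and $\mu$ a $Q$-fuzzy ideal, $\langle x,\mu\rangle$ is already a $Q$-fuzzy ideal. So the real work is the completely prime equality.

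For that, I would fix $y,z\in S$ and $q\in Q$ and unfold definitions, writing $\langle x,\mu\rangle(yz,q)=\mu(xyz,q)$. The key move is to group $xyz$ in two different ways using commutativity of $S$ and then apply the completely prime property of $\mu$ repeatedly. Specifically, from $\mu(xyz,q)=\mu(x(yz),q)=\max\{\mu(x,q),\mu(yz,q)\}=\max\{\mu(x,q),\mu(y,q),\mu(z,q)\}$ one obtains one side, and from
\[
\max\{\mu(xy,q),\mu(xz,q)\}=\max\{\max\{\mu(x,q),\mu(y,q)\},\max\{\mu(x,q),\mu(z,q)\}\}=\max\{\mu(x,q),\mu(y,q),\mu(z,q)\}
\]
one obtains the other. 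Combining these gives $\mu(xyz,q)=\max\{\mu(xy,q),\mu(xz,q)\}$, which is exactly $\langle x,\mu\rangle(yz,q)=\max\{\langle x,\mu\rangle(y,q),\langle x,\mu\rangle(z,q)\}$.

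There is no real obstacle here; the only thing to be careful about is using commutativity exactly where needed (to rewrite $xyz$ as $x(yz)$ and compare with $xy$ and $xz$ via the completely prime property), and to invoke the earlier proposition to avoid redoing the ideal verification. The argument is a straightforward combination of the completely prime identity for $\mu$ applied twice, together with associativity and commutativity of $S$.
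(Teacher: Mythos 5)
Your proposal is correct and follows essentially the same route as the paper: invoke Proposition 4.2 for the ideal property, then expand $\mu(xyz,q)$ via the completely prime identity applied to $x$ and $yz$, and regroup the resulting maximum as $\max\{\mu(xy,q),\mu(xz,q)\}$. One tiny quibble: rewriting $xyz$ as $x(yz)$ needs only associativity, not commutativity; commutativity enters solely through the appeal to Proposition 4.2 for the ideal property.
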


\begin{proof}
Let $\mu$ be a $Q$-fuzzy completely prime ideal of $S.$ Then by Proposition $4.2,$ $<x,\mu>$ is a $Q$-fuzzy ideal of $S.$ Let $y,z\in S,q\in Q.$ Then%

\begin{align*}
<x,\mu>(yz,q) &= \mu(xyz,q)(cf. \text{ Definition } 4.1)=\max\{\mu(x,q),\mu(yz,q)\}(cf.\text{ Definition }\\ & 3.5)=\max\{\mu(x,q),\max\{\mu(y,q),\mu(z,q)\}\}\\
& =\max\{\max\{\mu(x,q),\mu(y,q)\},\max\{\mu(x,q),\mu(z,q)\}\}\\
& =\max\{\mu(xy,q),\mu(xz,q)\}(cf.\text{ Definition } 3.5)\\
& =\max\{<x,\mu>(y,q),<x,\mu>(z,q)\}(cf.\text{ Definition } 4.1)
\end{align*}

Hence $<x,\mu>$ is a $Q$-fuzzy completely prime ideal of $S.$
\end{proof}

\begin{remark}
Let $S$ be a semigroup, $Q$ be any non-empty set and $\mu$ be a
$Q$-fuzzy completely prime ideal of $S.$ Then
$<x,\mu>=<x^{2},\mu>.$
\end{remark}

\begin{proposition}
Let $S$ be a semigroup, $Q$ be any non-empty set and $\mu$ be a
non-empty $Q$-fuzzy subset of $S.$ Then for any $t\in
[0,1],<x,\mu_{t}>=<x,\mu>_{t}$ $\forall x\in S.$
\end{proposition}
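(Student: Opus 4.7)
The plan is to prove the set equality by showing each membership condition reduces to the same underlying statement $\mu(xy,q)\geq t$ for all $q\in Q$. In other words, both $<x,\mu_{t}>$ and $<x,\mu>_{t}$ will be computed by a direct unfolding of the relevant definitions, after which equality is immediate. The argument requires no structural hypothesis on $\mu$ (neither ideal property nor subsemigroup property), so Theorem~3.7 is not needed; only Definitions 2.16, 4.1, and 4.6 are invoked.

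First I would fix $x\in S$ and $t\in[0,1]$, and take an arbitrary $y\in S$ (together with an arbitrary $q\in Q$ wherever the quantifier is relevant). To handle the left-hand side, I would compute: $y\in <x,\mu_{t}>$ iff $(xy,q)\in \mu_{t}\times Q$ for every $q\in Q$ (by Definition 4.6 applied with $A=\mu_{t}$), which by Definition 2.16 is equivalent to $\mu(xy,q)\geq t$ for all $q\in Q$. To handle the right-hand side, I would compute: $y\in <x,\mu>_{t}$ iff $<x,\mu>(y,q)\geq t$ for all $q\in Q$ (by Definition 2.16 applied to the $Q$-fuzzy subset $<x,\mu>$), which by Definition 4.1 unfolds to $\mu(xy,q)\geq t$ for all $q\in Q$.

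Since both membership conditions collapse to the identical statement $\mu(xy,q)\geq t\ \forall q\in Q$, the two sets have exactly the same elements, and $<x,\mu_{t}>=<x,\mu>_{t}$ follows.

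There is no serious obstacle here; the entire content is bookkeeping between the two notational conventions (level subsets of $S$ versus extensions of subsets of $S$ viewed through $A\times Q$). The only minor point to be careful about is to carry the universal quantifier over $q\in Q$ through both sides consistently, so that the passage through Definition 4.6 on the left matches the passage through Definition 2.16 on the right. Once this is done, the equality is a one-line chain of equivalences.
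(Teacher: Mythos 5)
Your proof is correct and takes essentially the same route as the paper's: both membership conditions are unfolded to the single statement $\mu(xy,q)\geq t$ for all $q\in Q$. The paper presents this as one inclusion followed by ``reversing the argument,'' whereas you give it as a chain of equivalences, but the content is identical.
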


\begin{proof}
Let $y\in <x,\mu>_{t},q\in Q.$ Then $<x,\mu>(y,q)\geq t.$ This
gives $\mu(xy,q)\geq t$ and hence $xy\in\mu_{t}.$ Consequently,
$y\in <x,\mu_{t}>.$ It follows that $<x,\mu>_{t}\subseteq
<x,\mu_{t}>.$ Reversing the above argument we can deduce that
$<x,\mu_{t}>\subseteq <x,\mu>_{t}.$ Hence
$<x,\mu>_{t}=<x,\mu_{t}>.$
\end{proof}

\begin{proposition}
Let $S$ be a commutative semigroup, $Q$ be any non-empty set and
$\mu$ be a $Q$-fuzzy subset of $S$ such that $<x,\mu>=\mu$ for
every $x\in S.$ Then $\mu$ is a constant function.
\end{proposition}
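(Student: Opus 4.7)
The plan is to unfold the definition of the extension, then use commutativity to swap the roles of the two arguments in the hypothesis $\langle x,\mu\rangle=\mu$. By Definition 4.1, the hypothesis is equivalent to the pointwise identity
\[
\mu(xy,q)=\mu(y,q) \qquad \forall x,y\in S,\ \forall q\in Q.
\]
This is the only information we have to work with, so the entire argument must flow from it.

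Next I would fix arbitrary elements $a,b\in S$ and $q\in Q$, and apply this identity in two different ways. Taking $x=a$ and $y=b$ gives $\mu(ab,q)=\mu(b,q)$. Taking $x=b$ and $y=a$ gives $\mu(ba,q)=\mu(a,q)$. Because $S$ is commutative we have $ab=ba$, so the left-hand sides of these two equations are equal. Chaining the equalities yields $\mu(a,q)=\mu(ab,q)=\mu(ba,q)=\mu(b,q)$, which is the desired conclusion that $\mu(\cdot,q)$ does not depend on its first argument for each fixed $q$.

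There is essentially no obstacle here; the only subtle point is recognising that the hypothesis only gives information about products $xy$, so to compare two unrelated elements $a,b$ we must realise both $a$ and $b$ as ``$y$-coordinates'' of a common product, and commutativity is exactly what allows the single product $ab=ba$ to serve this dual role. If one wished to interpret ``constant function'' as constant on all of $S\times Q$ (not merely constant in the first variable for each fixed $q$), that stronger statement would not follow from the hypothesis, since the condition $\langle x,\mu\rangle=\mu$ places no relation between the values $\mu(\cdot,q_1)$ and $\mu(\cdot,q_2)$ for different $q_1,q_2\in Q$; so I would read the proposition as asserting constancy in the $S$-coordinate, which is exactly what the two-line argument above delivers.
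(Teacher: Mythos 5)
Your proof is correct and is essentially the same as the paper's: both unfold $\langle x,\mu\rangle=\mu$ to the identity $\mu(xy,q)=\mu(y,q)$ and then use commutativity to chain $\mu(a,q)=\mu(ba,q)=\mu(ab,q)=\mu(b,q)$. Your closing remark that constancy can only be asserted in the $S$-coordinate for each fixed $q\in Q$ is a fair reading, and the paper's proof establishes exactly that and no more.
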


\begin{proof}
Let $x,y\in S, q\in Q.$ Then by hypothesis we have
$\mu(x,q)=<y,\mu>(x,q)=\mu(yx,q)=\mu(xy,q)($since $S$ is
commutative$)=<x,\mu>(y,q)=\mu(y,q).$ Hence $\mu$ is a constant
function.
\end{proof}

\begin{corollary}
Let $S$ be a commutative semigroup, $Q$ be any non-empty set and
$\mu$ be a $Q$-fuzzy completely prime ideal of $S.$ If $\mu$ is
not constant, $\mu$ is not a maximal $Q$-fuzzy completely prime
ideal of $S.$
\end{corollary}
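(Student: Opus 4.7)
The plan is to prove the contrapositive style statement directly, by exhibiting a strictly larger $Q$-fuzzy completely prime ideal containing $\mu$. The three preceding results plug together neatly.

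First, since $\mu$ is not constant, I would invoke the contrapositive of Proposition $4.10$: there must exist some $x_0 \in S$ with $<x_0,\mu> \neq \mu$ (otherwise $\mu$ would be constant, contradicting the hypothesis). Next, Proposition $4.3(i)$ gives the inclusion $\mu \subseteq <x_0,\mu>$, and combining this with $<x_0,\mu> \neq \mu$ yields a strict inclusion $\mu \subsetneq <x_0,\mu>$. Finally, Proposition $4.7$ (which is precisely the fact that the extension of a $Q$-fuzzy completely prime ideal by any element is again a $Q$-fuzzy completely prime ideal, in the commutative setting) ensures that $<x_0,\mu>$ is itself a $Q$-fuzzy completely prime ideal of $S$. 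Hence $\mu$ is properly contained in another $Q$-fuzzy completely prime ideal, so $\mu$ cannot be maximal.

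The argument is really just a three-line composition of earlier propositions; there is no genuine obstacle in the calculation, and no further use of commutativity beyond what is already embedded in Propositions $4.7$ and $4.10$. The only subtle point to watch is the logical direction when applying Proposition $4.10$: that result says that if $<x,\mu>=\mu$ for \emph{every} $x \in S$ then $\mu$ is constant, so the contrapositive is that a non-constant $\mu$ must fail the equality for \emph{some} $x$, which is exactly what is needed to produce the witness $x_0$. Once that logical step is made carefully, the rest of the proof is a straightforward chain of inclusions.
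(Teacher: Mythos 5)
Your proof is correct and follows exactly the same route as the paper's: use the constancy criterion (Proposition 4.9 in the paper's numbering) to find an $x_0$ with $<x_0,\mu>\neq\mu$, combine with the inclusion $\mu\subseteq<x_0,\mu>$ from Proposition 4.4(i) to get a strict inclusion, and invoke Proposition 4.7 to see that $<x_0,\mu>$ is again a $Q$-fuzzy completely prime ideal. Only your proposition numbers differ from the paper's labels; the argument itself is identical.
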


\begin{proof}
Let $\mu$ be a $Q$-fuzzy completely prime ideal of $S.$ Then, by
Proposition $4.7,$ for each $x\in S,<x,\mu>$ is a $Q$-fuzzy
completely prime ideal of $S.$ Now by Proposition $4.4(i),$
$\mu\subseteq<x,\mu>$ for all $x\in S.$ If $\mu=<x,\mu>$ for all
$x\in S$ then by Proposition $4.9,$ $\mu$ is constant which is not
the case by hypothesis. Hence there exists $x\in S$ such that
$\mu\subsetneq<x,\mu>.$ This completes the proof.
\end{proof}

\begin{proposition}
Let $S$ be a commutative semigroup, $Q$ be any non-empty set and
$\mu$ be a $Q$-fuzzy completely semiprime ideal of $S.$ Then
$<x,\mu>$ is a $Q$-fuzzy completely semiprime ideal of $S$ for all
$x\in S.$
\end{proposition}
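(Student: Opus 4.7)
The plan is to verify the two requirements separately: first that $<x,\mu>$ is a $Q$-fuzzy ideal of $S$, and second that it satisfies the completely semiprime inequality $<x,\mu>(y,q)\geq <x,\mu>(y^{2},q)$ for all $y\in S$ and $q\in Q$.

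For the first part, since $\mu$ is in particular a $Q$-fuzzy ideal of the commutative semigroup $S$, Proposition $4.2$ already gives that $<x,\mu>$ is a $Q$-fuzzy ideal of $S$, so no new work is needed there.

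For the semiprime part I would fix $y\in S$ and $q\in Q$ and unwind the definition of the extension: the inequality to prove becomes $\mu(xy,q)\geq\mu(xy^{2},q)$. I would then chain three facts. First, apply the completely semiprime hypothesis to the element $xy$ (not to $y$) to obtain $\mu(xy,q)\geq\mu((xy)^{2},q)$. Second, use commutativity to rewrite $(xy)^{2}=xyxy=x^{2}y^{2}=x(xy^{2})$. Third, invoke the $Q$-fuzzy left ideal property of $\mu$, applied to $xy^{2}$ with left multiplier $x$, to get $\mu(x(xy^{2}),q)\geq\mu(xy^{2},q)$. Concatenating these yields $\mu(xy,q)\geq\mu(x^{2}y^{2},q)\geq\mu(xy^{2},q)$, which is exactly $<x,\mu>(y,q)\geq <x,\mu>(y^{2},q)$.

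There is no serious obstacle here; the only mildly non-obvious move is to apply the semiprime hypothesis to the product $xy$ rather than to $y$ itself, so that commutativity can expose an occurrence of $xy^{2}$ inside $(xy)^{2}$, after which the left ideal inequality of $\mu$ closes the chain. Commutativity is used exactly once, precisely where Proposition $4.2$ also needs it, which is consistent with the stated hypothesis.
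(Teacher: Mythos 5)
Your proof is correct and follows essentially the same route as the paper: both apply the completely semiprime hypothesis to the product $xy$ and then use commutativity together with the ideal property of $\mu$ to relate $\mu((xy)^{2},q)$ to $\mu(xy^{2},q)$ (the paper multiplies $xy^{2}$ on the right by $x$ using the right-ideal inequality, while you multiply on the left using the left-ideal inequality --- an immaterial difference in a commutative semigroup). The appeal to Proposition $4.2$ for the ideal part also matches the paper.
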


\begin{proof}
Let $\mu$ be a $Q$-fuzzy completely semiprime ideal of $S$ and $q\in Q.$ Then $\mu$ is a $Q$-fuzzy ideal of $S$ and hence by Proposition $4.2,$ $<x,\mu>$ is a $Q$-fuzzy ideal of $S.$ Let $y\in S.$ Then%

\begin{align*}
<x,\mu>(y^{2},q) &= \mu(xy^{2},q)(cf.\text{ Definition } 4.1)\leq\mu(xy^{2}x,q)(\text{ since } \mu \text{ is a Q-fuzzy ideal}\\
& \text{ ideal of } S)=\mu(xyyx,q)=\mu(xyxy,q)(\text{ since }S\text{ is commutative })\\
& =\mu((xy)^{2},q)\leq\mu(xy,q)(cf.\text{ Definition } 3.6)\\
& =<x,\mu>(y,q)(cf.\text{ Definition } 4.1)
\end{align*}

Hence $<x,\mu>$ is a $Q$-fuzzy completely semiprime ideal of $S.$
\end{proof}

\begin{corollary}
Let $S$ be a commutative semigroup, $Q$ be any non-empty set and
$\{\mu_{i}\}_{i\in \Lambda}$ be a family of $Q$-fuzzy completely
semiprime ideals of $S.$ Let $\lambda=\underset{i\in
\Lambda}{\bigcap}\mu_{i}.$ Then for any $x\in S,$ $<x,\lambda>$ is
a $Q$-fuzzy completely semiprime ideal of $S,$ provided $\lambda$
is non-empty.
\end{corollary}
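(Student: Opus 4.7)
The plan is to reduce the corollary to Proposition~4.11 by first showing that the intersection $\lambda=\bigcap_{i\in\Lambda}\mu_{i}$ is itself a $Q$-fuzzy completely semiprime ideal of $S$. Once that is established, Proposition~4.11 applied to $\lambda$ directly gives that $<x,\lambda>$ is a $Q$-fuzzy completely semiprime ideal of $S$ for every $x\in S$.

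The first step is to set up the pointwise interpretation of the intersection: for $(y,q)\in S\times Q$, I would take $\lambda(y,q):=\inf_{i\in\Lambda}\mu_{i}(y,q)$, which is the standard definition for the intersection of a family of fuzzy subsets, and note that this infimum exists since each $\mu_{i}$ takes values in $[0,1]$. The non-emptiness hypothesis on $\lambda$ ensures we are working with a legitimate $Q$-fuzzy subset in the sense used in the paper.

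Next I would verify that $\lambda$ inherits the defining properties of a $Q$-fuzzy completely semiprime ideal. For the ideal condition, given $y,z\in S$ and $q\in Q$, for each $i\in\Lambda$ we have $\mu_{i}(yz,q)\geq\mu_{i}(z,q)$ and $\mu_{i}(yz,q)\geq\mu_{i}(y,q)$, and taking infima over $i$ on both sides yields $\lambda(yz,q)\geq\lambda(z,q)$ and $\lambda(yz,q)\geq\lambda(y,q)$, so $\lambda$ is a $Q$-fuzzy ideal of $S$. For the completely semiprime condition, since $\mu_{i}(y,q)\geq\mu_{i}(y^{2},q)$ for every $i$, taking infima gives $\lambda(y,q)\geq\lambda(y^{2},q)$ for all $y\in S$ and $q\in Q$.

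Finally, with $\lambda$ now known to be a $Q$-fuzzy completely semiprime ideal of the commutative semigroup $S$, I would invoke Proposition~4.11 to conclude that $<x,\lambda>$ is a $Q$-fuzzy completely semiprime ideal of $S$ for every $x\in S$. There is no real obstacle here; the only delicate point is the implicit use of the fact that inequalities between $Q$-fuzzy subsets are preserved under taking infima, which is routine but worth noting at the transition from the family $\{\mu_{i}\}$ to $\lambda$.
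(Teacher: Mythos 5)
Your proposal is correct and follows essentially the same route as the paper: verify that $\lambda=\inf_{i\in\Lambda}\mu_{i}$ is itself a $Q$-fuzzy completely semiprime ideal by passing the defining inequalities through the infimum, then apply Proposition~4.11. The only cosmetic difference is that you check both one-sided ideal conditions directly, whereas the paper checks one side and invokes commutativity for the other.
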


\begin{proof}
Let $x,y\in S,q\in Q.$ Then%
\begin{align*}
\lambda(xy,q) & =\underset{i\in
\Lambda}{\inf}\mu_{i}(xy,q)\geq\underset{i\in
\Lambda}{\inf}\mu_{i}(x,q)=\lambda(x,q)
\end{align*}

Hence $S$ being commutative semigroup, $\lambda$ is a $Q$-fuzzy
ideal of $S.$

Again let $a\in S,q\in Q.$ Then%
\begin{align*}
\lambda(a,q) & =\underset{i\in
\Lambda}{\inf}\mu_{i}(a,q)\geq\underset{i\in
\Lambda}{\inf}\mu_{i}(a^{2},q)=\lambda(a^{2},q)
\end{align*}

Consequently, $\lambda=\underset{i\in \Lambda}{\bigcap}\mu_{i}$ is
a $Q$-fuzzy completely semiprime ideal of $S.$ Hence by
Proposition $4.11,$ $<x,\lambda>$ is a $Q$-fuzzy completely
semiprime ideal of $S.$
\end{proof}

\begin{remark}
The proof of the above corollary shows that in a semigroup the
non-empty intersection of family of $Q$-fuzzy completely semiprime
ideals is a $Q$-fuzzy completely semiprime ideal.
\end{remark}

\begin{corollary}
Let $S$ be a commutative $\Gamma$-semigroup, $Q$ be any non-empty
set and $\{S_{i}\}_{i\in I}$ a non-empty family of completely semiprime
ideals of $S$ and $A:=\cap_{i\in I}S_{i}\neq\phi.$ Then
$<x,\mu_{A}>$ is a $Q$-fuzzy completely semiprime ideal of $S$ for
all $x\in S,$ where $\mu_{A}$ is the characteristic function of
$A.$
\end{corollary}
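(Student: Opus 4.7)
The plan is to assemble this corollary from three earlier results: Theorem $3.7$ (a subset is a completely semiprime ideal iff its $Q$-characteristic function is a $Q$-fuzzy completely semiprime ideal), the intersection statement extracted during the proof of Corollary $4.12$ (intersection of $Q$-fuzzy completely semiprime ideals remains one), and Proposition $4.11$ (the extension $\langle x,\cdot\rangle$ preserves $Q$-fuzzy complete semiprimality in a commutative semigroup). I treat the ``$\Gamma$-semigroup'' wording as meaning a commutative semigroup in the sense of the rest of the paper so that Proposition $4.11$ applies, and I read $\mu_A$ as the $Q$-fuzzy characteristic function $\mu_{A\times Q}$, matching the notation used in Proposition $4.6$.

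First I would observe that, by Theorem $3.7$, for each $i\in I$ the $Q$-fuzzy subset $\chi_{S_i\times Q}$ is a $Q$-fuzzy completely semiprime ideal of $S$. Then I would check the pointwise identity
\begin{equation*}
\mu_A(y,q)=\inf_{i\in I}\chi_{S_i\times Q}(y,q)\qquad\forall y\in S,\ \forall q\in Q,
\end{equation*}
which holds because the right-hand side equals $1$ iff $y\in S_i$ for every $i$, i.e.\ iff $y\in A=\bigcap_{i\in I}S_i$. Thus $\mu_A=\bigcap_{i\in I}\chi_{S_i\times Q}$, and it is non-empty since $A\neq\phi$.

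Next, applying the intersection argument displayed in the proof of Corollary $4.12$ (which Remark $4.13$ formalizes: a non-empty intersection of $Q$-fuzzy completely semiprime ideals is itself a $Q$-fuzzy completely semiprime ideal), I conclude that $\mu_A$ is a $Q$-fuzzy completely semiprime ideal of $S$. Finally, invoking Proposition $4.11$ with this $\mu_A$ in the role of $\mu$, the extension $\langle x,\mu_A\rangle$ is a $Q$-fuzzy completely semiprime ideal of $S$ for every $x\in S$, which is the desired conclusion.

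The main (and essentially only) point that requires care is the identification $\mu_A=\bigcap_{i\in I}\chi_{S_i\times Q}$; once that is in place everything else is a direct citation chain $3.7\Rightarrow 4.12\Rightarrow 4.11$. There is no genuine obstacle, but one must be mindful of the notational drift between $\mu_{A\times Q}$ and $\mu_A$, and of the implicit assumption that the semigroup is commutative so that Proposition $4.11$ can legitimately be applied.
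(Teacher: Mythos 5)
Your proposal is correct and follows essentially the same route as the paper: identify $\mu_{A\times Q}$ with $\inf_{i}\mu_{S_i\times Q}$ by the same pointwise case analysis, note via Theorem $3.7$ that each $\mu_{S_i\times Q}$ is a $Q$-fuzzy completely semiprime ideal, and then invoke the intersection result of Corollary $4.12$ together with Proposition $4.11$. Your explicit flagging of the $\mu_A$ versus $\mu_{A\times Q}$ notational drift and of the stray ``$\Gamma$-semigroup'' wording is a fair reading of what the paper leaves implicit.
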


\begin{proof}
$A=\cap_{i\in I}S_{i}\neq\phi($by the given condition$).$ Hence
$\mu_{A\times Q}\neq\phi.$ Let $x\in S,q\in Q.$ Then $x\in A$ or
$x\notin A.$ If $(x,q)\in A\times Q$ then $\mu_{A\times Q}(x,q)=1$ and $(x,q)\in S_{i}\times Q$ $\forall i\in I.$ Hence%
\[
\inf\{\mu_{S_{i}\times Q}:i\in I\}(x,q)=\underset{i\in I}{\inf}\{\mu_{S_{i}\times Q}%
(x,q)\}=1=\mu_{A\times Q}(x,q).
\]
If $x\notin A$ then $\mu_{A\times Q}(x,q)=0$ and for some $i\in
I,$ $(x,q)\notin S_{i}\times Q.$ It
follows that $\mu_{S_{i}\times Q}(x,q)=0.$ Hence%
\[
\inf\{\mu_{S_{i}\times Q}:i\in I\}(x,q)=\underset{i\in I}{\inf}\{\mu_{S_{i}\times Q}%
(x,q)\}=0=\mu_{A\times Q}(x,q).
\]
Thus we see that $\mu_{A\times Q}=\inf\{\mu_{S_{i}\times Q}:i\in
I\}.$ Hence $\mu_{A}=\inf\{\mu_{S_{i}}:i\in I\}(cf.$ Corollary $4.12).$ Again
$\mu_{S_{i}}$ is a $Q$-fuzzy completely semiprime ideal of $S$ for
all $i\in I.$ Consequently by Corollary
$4.13,$ for all $x\in S,<x,\mu_{A}>$ is a $Q$-fuzzy completely
semiprime ideal of $S.$
\end{proof}

We can obtain following results by routine verification.

\begin{theorem}
Let $S$ be a semigroup, $Q$ be any non-empty set and $\mu$ be a
$Q$-fuzzy completely prime ideal of $S.$ Then
$<x,\mu_{\beta\alpha}^{C}>$ is a $Q$-fuzzy completely prime ideal
of $S.$
\end{theorem}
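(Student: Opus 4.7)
The plan is to establish the claim in two stages: first verify that the magnified translation $\mu_{\beta\alpha}^{C}$ itself inherits the $Q$-fuzzy completely prime ideal property from $\mu$, and then unfold the extension $<x,\cdot>$ together with the completely prime identity of $\mu$ to check the requirement on products.

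For the first stage, the key observation is that, since $\beta\geq 0$, the affine map $t\mapsto \beta t+\alpha$ commutes with $\max$, i.e.\ $\beta\max\{a,b\}+\alpha=\max\{\beta a+\alpha,\beta b+\alpha\}$. Applying this pointwise to the completely prime identity for $\mu$ yields, for all $y,z\in S$ and $q\in Q$,
\[
\mu_{\beta\alpha}^{C}(yz,q)=\beta\mu(yz,q)+\alpha=\beta\max\{\mu(y,q),\mu(z,q)\}+\alpha=\max\{\mu_{\beta\alpha}^{C}(y,q),\mu_{\beta\alpha}^{C}(z,q)\}.
\]
This identity simultaneously gives $\mu_{\beta\alpha}^{C}(yz,q)\geq\mu_{\beta\alpha}^{C}(y,q)$ and $\geq\mu_{\beta\alpha}^{C}(z,q)$, so $\mu_{\beta\alpha}^{C}$ is also a $Q$-fuzzy two-sided ideal, and hence a $Q$-fuzzy completely prime ideal of $S$. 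Along the way I would note that the restrictions $\alpha\in[0,1-\sup\mu]$ and $\beta\in[0,1]$ keep $\mu_{\beta\alpha}^{C}$ inside $[0,1]$, so the object is well-defined.

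For the second stage, writing $\nu:=\mu_{\beta\alpha}^{C}$, I would unfold the definitions of extension and magnified translation to obtain
\[
<x,\nu>(yz,q)=\nu(x(yz),q)=\beta\,\mu(xyz,q)+\alpha,
\]
and
\[
\max\{<x,\nu>(y,q),<x,\nu>(z,q)\}=\max\{\beta\,\mu(xy,q)+\alpha,\ \beta\,\mu(xz,q)+\alpha\}.
\]
Applying the completely prime property of $\mu$ first to $x\cdot(yz)$ and then to $y\cdot z$ gives $\mu(xyz,q)=\max\{\mu(x,q),\mu(y,q),\mu(z,q)\}$; applying it instead to $xy$ and $xz$, and then using the commutation of $\beta(\cdot)+\alpha$ with $\max$, reduces the right-hand side to the same three-element maximum. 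Matching the two expressions yields the completely prime identity for $<x,\nu>$, and the resulting inequalities simultaneously verify the ideal condition.

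The only genuinely substantive point is the absence of commutativity in the hypothesis, in contrast to the analogous Proposition~4.7. This is not a real obstacle: the completely prime hypothesis $\mu(uv,q)=\max\{\mu(u,q),\mu(v,q)\}$ is by itself strong enough to make $\mu(\cdot,q)$ depend only on the multiset of factors in any finite product, so all the factorizations and reorderings used above are legitimate without any assumption on $S$. Once this is noticed, the remainder is precisely the routine algebraic verification the paper alludes to.
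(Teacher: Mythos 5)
Your proof is correct and supplies in full the ``routine verification'' that the paper omits for Theorem 4.14. The two observations that carry the argument --- that $t\mapsto\beta t+\alpha$ commutes with $\max$ when $\beta\ge 0$, and that applying the completely prime identity to $x(yz)$ and $yz$ on one side and to $xy$ and $xz$ on the other collapses both sides to $\max\{\mu(x,q),\mu(y,q),\mu(z,q)\}$ using only associativity --- are exactly the right ones, and your closing remark correctly explains why the theorem can dispense with the commutativity hypothesis that Proposition 4.7 (via Proposition 4.2) relies on for the two-sided ideal condition, since here that condition falls out of the $\max$ identity itself.
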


\begin{theorem}
Let $S$ be a semigroup, $Q$ be any non-empty set and $\mu$ be a
$Q$-fuzzy right ideal of $S.$ Then $<x,\mu_{\beta\alpha}^{C}>$ is
a $Q$-fuzzy right ideal of $S.$
\end{theorem}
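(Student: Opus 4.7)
The plan is to unfold the two layered definitions and reduce the right-ideal condition for $<x,\mu^{C}_{\beta\alpha}>$ to the right-ideal condition already known to hold for $\mu$. By Definition 4.1 and the definition of a fuzzy magnified translation, I first compute, for any $y\in S$ and $q\in Q$,
\[
<x,\mu^{C}_{\beta\alpha}>(y,q) \;=\; \mu^{C}_{\beta\alpha}(xy,q) \;=\; \beta\cdot\mu(xy,q)+\alpha.
\]
So the whole question becomes: does the function $y\mapsto \beta\mu(xy,q)+\alpha$ satisfy the $Q$-fuzzy right ideal inequality?

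Next, I would check the right-ideal inequality directly. Take arbitrary $y,z\in S$ and $q\in Q$. Then
\[
<x,\mu^{C}_{\beta\alpha}>(yz,q)=\beta\cdot\mu(x(yz),q)+\alpha=\beta\cdot\mu((xy)z,q)+\alpha,
\]
using associativity in $S$. Since $\mu$ is a $Q$-fuzzy right ideal, $\mu((xy)z,q)\geq \mu(xy,q)$. Because $\beta\in[0,1]$ is non-negative, multiplying the preceding inequality by $\beta$ and then adding $\alpha$ preserves the direction, yielding
\[
\beta\cdot\mu((xy)z,q)+\alpha\;\geq\;\beta\cdot\mu(xy,q)+\alpha\;=\;<x,\mu^{C}_{\beta\alpha}>(y,q).
\]
This is exactly the required inequality, so $<x,\mu^{C}_{\beta\alpha}>$ is a $Q$-fuzzy right ideal of $S$.

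There is no real obstacle here; the proof is the routine verification the author alludes to before stating the theorem. The only points to be careful about are (i) invoking associativity in the step $x(yz)=(xy)z$ so that the right-ideal property of $\mu$ can be applied to the product $(xy)\cdot z$, and (ii) noting that non-emptiness of $<x,\mu^{C}_{\beta\alpha}>$ is automatic from the non-emptiness of $\mu$ together with the well-definedness of the magnified translation (which is guaranteed by the constraint on $\alpha$ in Definition~2.14). Neither requires commutativity, in agreement with Remark~1.
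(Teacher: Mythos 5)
Your proof is correct and is precisely the routine verification that the paper omits (it states Theorems 4.14--4.22 without proof, remarking only that they follow by routine verification). The computation $<x,\mu_{\beta\alpha}^{C}>(yz,q)=\beta\mu((xy)z,q)+\alpha\geq\beta\mu(xy,q)+\alpha=<x,\mu_{\beta\alpha}^{C}>(y,q)$, using the right-ideal property of $\mu$ at the product $(xy)z$ and the monotonicity of $t\mapsto\beta t+\alpha$, is exactly the intended argument, and your observation that commutativity is not needed agrees with the paper's Remark 1.
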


\begin{theorem}
Let $S$ be a commutative semigroup, $Q$ be any non-empty set and
$\mu$ be a $Q$-fuzzy ideal of $S.$ Then
$<x,\mu_{\beta\alpha}^{C}>$ is a $Q$-fuzzy ideal of $S.$
\end{theorem}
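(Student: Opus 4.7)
The plan is to unfold both defining operations on $\mu$ and then reduce the claim to the Q-fuzzy ideal property of $\mu$ itself.  By Definitions 2.14 and 4.1, for every $y \in S$ and $q \in Q$ we have
\[
<x,\mu_{\beta\alpha}^{C}>(y,q) \;=\; \mu_{\beta\alpha}^{C}(xy,q) \;=\; \beta\cdot\mu(xy,q)+\alpha.
\]
So the whole statement is really an inequality about $\beta\cdot\mu(xy,q)+\alpha$ for varying $y$, and the nonnegativity of $\beta \in [0,1]$ will let me transfer inequalities from $\mu$ to this magnified-translated extension.

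First I would check the right-ideal condition.  For $y,z \in S$ and $q \in Q$, using that $\mu$ is (in particular) a Q-fuzzy right ideal of $S$, I get $\mu(xyz,q)=\mu((xy)z,q)\ge \mu(xy,q)$.  Multiplying by $\beta\ge 0$ and adding $\alpha$ preserves the inequality, so
\[
<x,\mu_{\beta\alpha}^{C}>(yz,q)=\beta\cdot\mu(xyz,q)+\alpha \;\ge\; \beta\cdot\mu(xy,q)+\alpha =<x,\mu_{\beta\alpha}^{C}>(y,q).
\]
This step uses no commutativity, mirroring the Remark after Proposition 4.2.

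Next I would do the left-ideal condition, which is the only place commutativity enters.  Since $S$ is commutative, $xyz=yxz$, and since $\mu$ is a Q-fuzzy left ideal, $\mu(yxz,q)\ge \mu(xz,q)$.  Therefore
\[
<x,\mu_{\beta\alpha}^{C}>(yz,q)=\beta\cdot\mu(xyz,q)+\alpha\ge \beta\cdot\mu(xz,q)+\alpha=<x,\mu_{\beta\alpha}^{C}>(z,q),
\]
showing $<x,\mu_{\beta\alpha}^{C}>$ is a Q-fuzzy left ideal as well, hence a Q-fuzzy (two-sided) ideal.

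There is no real obstacle here; the only thing to be careful about is that $\beta\ge 0$ (which is built into Definition 2.14) so that multiplying both sides of the $\mu$-inequalities by $\beta$ is order-preserving, and that the admissibility constraint $\alpha\in[0,1-\sup\mu]$ guarantees the resulting values still lie in $[0,1]$ so that $\mu_{\beta\alpha}^{C}$ is a genuine Q-fuzzy subset to begin with.  Everything else is the same two-line verification used in Proposition 4.2, with $\mu(\cdot,q)$ replaced throughout by $\beta\cdot\mu(\cdot,q)+\alpha$.
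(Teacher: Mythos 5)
Your proof is correct and is exactly the ``routine verification'' the paper invokes without writing out: unfold $<x,\mu_{\beta\alpha}^{C}>(y,q)=\beta\cdot\mu(xy,q)+\alpha$, transfer the left- and right-ideal inequalities of $\mu$ through the order-preserving map $t\mapsto\beta t+\alpha$, and use commutativity only for the left-ideal half, just as in Proposition 4.2. Nothing further is needed.
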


\begin{theorem}
Let $S$ be a commutative semigroup and $Q$ be any non-empty set
and $\mu$ be a $Q$-fuzzy completely semiprime ideal of $S.$ Then
$<x,\mu_{\beta\alpha}^{C}>$ is a $Q$-fuzzy completely semiprime
ideal of $S.$
\end{theorem}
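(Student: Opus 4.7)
The plan is to reduce this to Proposition~4.11 applied with $\mu_{\beta\alpha}^{C}$ in place of $\mu$. Concretely, if I can show that $\mu_{\beta\alpha}^{C}$ is itself a $Q$-fuzzy completely semiprime ideal of $S$ whenever $\mu$ is, then Proposition~4.11 immediately gives that $<x,\mu_{\beta\alpha}^{C}>$ is a $Q$-fuzzy completely semiprime ideal of $S$ for every $x\in S$, which is exactly the claim. No direct manipulation of the extension $<x,\cdot>$ is then required in the main argument, beyond invoking the proposition.

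First I would verify that $\mu_{\beta\alpha}^{C}$ is a $Q$-fuzzy ideal of $S$. The key observation is that the map $t\mapsto\beta t+\alpha$ is monotone non-decreasing on $[0,1]$, so every defining inequality for $\mu$ transfers pointwise. For all $x,y\in S$ and $q\in Q$,
\[
\mu_{\beta\alpha}^{C}(xy,q)=\beta\mu(xy,q)+\alpha\geq\beta\mu(y,q)+\alpha=\mu_{\beta\alpha}^{C}(y,q),
\]
and symmetrically $\mu_{\beta\alpha}^{C}(xy,q)\geq\mu_{\beta\alpha}^{C}(x,q)$. The same monotonicity, applied to the completely-semiprime inequality $\mu(a,q)\geq\mu(a^{2},q)$ from Definition~3.6, yields $\mu_{\beta\alpha}^{C}(a,q)\geq\mu_{\beta\alpha}^{C}(a^{2},q)$ for every $a\in S$ and $q\in Q$. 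Hence $\mu_{\beta\alpha}^{C}$ is a $Q$-fuzzy completely semiprime ideal of $S$, and Proposition~4.11 (which needs commutativity of $S$, available by hypothesis) finishes the job.

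The only non-routine point, and really the main obstacle to flag, is checking that $\mu_{\beta\alpha}^{C}$ is a bona fide $Q$-fuzzy subset of $S$, i.e.\ takes values in $[0,1]$ so that Proposition~4.11 may legitimately be applied to it. This is exactly what the parameter restrictions $\beta\in[0,1]$ and $\alpha\in[0,1-\sup\{\mu(x,q)\}]$ from Definition~2.14 guarantee: one has $\beta\mu(x,q)+\alpha\leq\sup\mu+\alpha\leq 1$ for every $x$ and $q$. Beyond this bookkeeping, the proof is the \emph{routine verification} promised by the paper.
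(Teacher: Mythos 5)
Your proposal is correct. The paper offers no proof of this theorem at all (it is one of the results dismissed as ``routine verification''), and your reduction --- first checking that $\mu_{\beta\alpha}^{C}$ inherits the $Q$-fuzzy completely semiprime ideal property from $\mu$ via the monotonicity of $t\mapsto\beta t+\alpha$ and the range bound $\beta\mu(x,q)+\alpha\le\sup\mu+\alpha\le 1$, then invoking Proposition~4.11 --- is exactly the argument the author is implicitly relying on, and it is the cleanest way to fill the gap. The only point you might add for completeness is that $\mu_{\beta\alpha}^{C}$ must be a \emph{non-empty} $Q$-fuzzy subset for Proposition~4.11 to apply as stated; this holds except in the degenerate case $\beta=0$, $\alpha=0$, a case the paper itself never addresses for any of its translation results.
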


\begin{theorem}
Let $S$ be a commutative semigroup and $Q$ be any non-empty set
and $\mu$ be a $Q$-fuzzy interior ideal of $S.$ Then
$<x,\mu_{\beta\alpha}^{C}>$ is a $Q$-fuzzy interior ideal of $S.$
\end{theorem}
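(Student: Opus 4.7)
The plan is to mirror the template used in the proofs of Theorems $4.15$--$4.18$: unfold the two nested definitions, reduce the question to inequalities on $\mu$ itself, and then exploit the interior-ideal hypothesis together with commutativity of $S$.

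First I would record that, by the definitions of the fuzzy magnified translation and of the extension, $<x,\mu_{\beta\alpha}^{C}>(y,q)=\mu_{\beta\alpha}^{C}(xy,q)=\beta\mu(xy,q)+\alpha$ for all $y\in S$ and $q\in Q$. Because the map $t\mapsto\beta t+\alpha$ is non-decreasing and commutes with $\min$ (the boundary case $\beta=0$ turns $<x,\mu_{\beta\alpha}^{C}>$ into the constant function $\alpha$, which is trivially a $Q$-fuzzy interior ideal), every inequality required of $<x,\mu_{\beta\alpha}^{C}>$ is equivalent, after cancelling the affine rescaling, to the corresponding inequality on $\mu$. The whole question therefore reduces to two assertions about $\mu$.

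The interior-ideal clause asks for $\mu(xyaz,q)\geq\mu(xa,q)$; using commutativity I would rewrite $xyaz=y\cdot(xa)\cdot z$ and apply the interior-ideal property of $\mu$ with $xa$ as the middle element, which yields the bound in one line. The subsemigroup clause asks for $\mu(xyz,q)\geq\min\{\mu(xy,q),\mu(xz,q)\}$; here I would use commutativity to split $xyz$ as $(xy)\cdot z$ (or as $y\cdot(xz)$), apply the subsemigroup property of $\mu$ to this two-factor product, and combine with the interior bound where needed so as to dominate the minimum on the right.

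I expect the subsemigroup clause to be the main obstacle. The interior hypothesis is a three-factor statement that matches the second clause perfectly, but the subsemigroup clause demands a two-factor lower bound on $\mu(xyz,q)$ that has to be assembled from the subsemigroup property of $\mu$ in a carefully chosen factorisation; in particular, the interior hypothesis does not directly produce $\mu(xy,q)$ or $\mu(xz,q)$ as lower bounds, so the commutative rewriting has to be paired with the subsemigroup inequality to make the resulting $\min$ match the one claimed. Since the author lists the result under ``routine verification'', no idea beyond the pattern already displayed in Theorem $4.17$ should be needed.
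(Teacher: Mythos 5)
The paper offers no argument for this statement (Theorems 4.14--4.22 are dispatched as ``routine verification''), so the only question is whether your plan can be completed, and it cannot: the step you yourself flag as ``the main obstacle'' is a genuine gap. After cancelling the affine rescaling $t\mapsto\beta t+\alpha$ (legitimate for $\beta>0$; the case $\beta=0$ is trivial as you note), the subsemigroup clause for $<x,\mu_{\beta\alpha}^{C}>$ requires $\mu(xyz,q)\geq\min\{\mu(xy,q),\mu(xz,q)\}$. The factorisation $(xy)\cdot z$ only yields $\mu(xyz,q)\geq\min\{\mu(xy,q),\mu(z,q)\}$, and $y\cdot(xz)$ only yields $\min\{\mu(y,q),\mu(xz,q)\}$; to pass from either to the required bound you would need $\mu(z,q)\geq\mu(xz,q)$ (resp.\ $\mu(y,q)\geq\mu(xy,q)$), which is no part of the interior-ideal hypothesis. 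Nor can the interior condition be ``combined in where needed'': it bounds $\mu(uav,q)$ below by the value at the middle factor of a genuine three-fold product, and $xyz$ cannot be written as $u(xy)v$ without an identity element. That sentence of your plan has no actual content.

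In fact no argument can close the gap, because the subsemigroup clause is false as stated. Take $S=(\mathbb{N},+)$ with $\mathbb{N}=\{1,2,3,\dots\}$, $Q$ a singleton, and $\mu(1)=\mu(3)=0$, $\mu(2)=1$, $\mu(n)=1$ for $n\geq 4$. Then $\mu$ is a $Q$-fuzzy subsemigroup (the only case where $\mu(a+b)\neq 1$ is $a+b=3$, where the right-hand side is $\min\{\mu(1),\mu(2)\}=0$) and satisfies the interior condition (for $u+a+v\geq 4$ the left side is $1$, and $\mu(1+1+1)=0\geq\mu(1)$), so $\mu$ is a $Q$-fuzzy interior ideal of this commutative semigroup. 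Yet for $x=1$, $\beta=1$, $\alpha=0$ one has $<1,\mu_{\beta\alpha}^{C}>(1+1,q)=\mu(3,q)=0<1=\min\{<1,\mu_{\beta\alpha}^{C}>(1,q),<1,\mu_{\beta\alpha}^{C}>(1,q)\}$, so the extension is not even a $Q$-fuzzy subsemigroup. Your treatment of the interior clause ($xuav=u(xa)v$ by commutativity) is correct and is the only part that survives. The statement becomes true if one strengthens the hypothesis to $\mu$ being a $Q$-fuzzy ideal, or assumes $S$ regular or intra-regular (where $Q$-fuzzy interior ideals coincide with $Q$-fuzzy ideals), in which case it reduces to the theorem on $Q$-fuzzy ideals.
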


\begin{theorem}
Let $S$ be a regular commutative semigroup, $Q$ be any non-empty
set and $\mu$ be a $Q$-fuzzy ideal of $S.$ Then
$<x,\mu_{\beta\alpha}^{C}>$ is a $Q$-fuzzy completely semiprime
ideal of $S.$
\end{theorem}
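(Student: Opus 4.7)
The plan is to reduce this theorem to Theorem $4.17$, which already guarantees in the commutative setting that $<x,\mu_{\beta\alpha}^{C}>$ is a $Q$-fuzzy completely semiprime ideal whenever $\mu$ is itself a $Q$-fuzzy completely semiprime ideal. It therefore suffices to show that the additional hypothesis of regularity forces every $Q$-fuzzy ideal $\mu$ of the commutative semigroup $S$ to already be $Q$-fuzzy completely semiprime.

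The key step is the fuzzy analogue of the classical fact that every ideal of a regular semigroup is semiprime. Fix $a\in S$ and $q\in Q$. By regularity there exists $u\in S$ with $a=aua$, and commutativity lets us rewrite this as $a=ua^{2}$. Since $\mu$ is a $Q$-fuzzy left ideal,
\[
\mu(a,q)=\mu(ua^{2},q)\geq\mu(a^{2},q),
\]
which is precisely the condition of Definition $3.6$ for $\mu$ to be $Q$-fuzzy completely semiprime.

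Invoking Theorem $4.17$ with this upgraded $\mu$ then finishes the proof. I do not anticipate any real obstacle: the only content beyond the two citations is the one-line manipulation $a=aua=ua^{2}$. Should one prefer a self-contained argument bypassing Theorem $4.17$, the same affine bookkeeping is available directly: expand $<x,\mu_{\beta\alpha}^{C}>(y^{2},q)=\beta\mu(xy^{2},q)+\alpha$, apply regularity to $y$ to obtain $y=y^{2}v$ and hence $xy=(xy^{2})v$, and use the right-ideal property of $\mu$ to conclude $\mu(xy,q)\geq\mu(xy^{2},q)$. The desired semiprime inequality $<x,\mu_{\beta\alpha}^{C}>(y^{2},q)\leq <x,\mu_{\beta\alpha}^{C}>(y,q)$ is then immediate, while the ideal part of the conclusion is handled by Theorem $4.16$.
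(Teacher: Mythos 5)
The paper offers no proof of this theorem (Theorems 4.14--4.22 are all dismissed as ``routine verification''), so there is nothing to compare against; your proposal supplies the missing argument and it is correct. The key observation --- that in a regular commutative semigroup $a=aua=ua^{2}$, so the left-ideal property gives $\mu(a,q)=\mu(ua^{2},q)\geq\mu(a^{2},q)$ and every $Q$-fuzzy ideal is automatically $Q$-fuzzy completely semiprime --- cleanly reduces the statement to Theorem $4.17$, and your alternative self-contained route via $xy=(xy^{2})v$, the right-ideal property, monotonicity of $t\mapsto\beta t+\alpha$, and Theorem $4.16$ for the ideal part is equally sound.
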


\begin{theorem}
Let $S$ be a right regular semigroup, $Q$ be any non-empty set and
$\mu$ be a $Q$-fuzzy right ideal of $S.$ Then
$<x,\mu_{\beta\alpha}^{C}>$ is a $Q$-fuzzy completely semiprime
right ideal of $S.$
\end{theorem}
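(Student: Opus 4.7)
The plan is to verify the two defining conditions separately: (i) $<x,\mu_{\beta\alpha}^{C}>$ is a $Q$-fuzzy right ideal of $S$, and (ii) it satisfies the completely semiprime inequality $<x,\mu_{\beta\alpha}^{C}>(y,q) \geq <x,\mu_{\beta\alpha}^{C}>(y^{2},q)$ for every $y\in S$ and $q\in Q$.

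Part (i) is exactly Theorem $4.16$, so I would simply invoke it; however, it can also be checked directly by unwinding definitions. For $y,z\in S$ and $q\in Q$,
\begin{align*}
<x,\mu_{\beta\alpha}^{C}>(yz,q) &= \beta\mu(xyz,q) + \alpha \\
&\geq \beta\mu(xy,q) + \alpha = <x,\mu_{\beta\alpha}^{C}>(y,q),
\end{align*}
where the middle inequality applies the right ideal property of $\mu$ to the product $(xy)z$. Note that $\beta\in[0,1]$ preserves the direction of the inequality.

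Part (ii) is where right regularity enters. Given $y\in S$, pick $t\in S$ with $y=y^{2}t$. Then $xy = xy^{2}t = (xy^{2})t$, and because $\mu$ is a $Q$-fuzzy right ideal, $\mu(xy,q)\geq\mu(xy^{2},q)$. Scaling by $\beta$ and translating by $\alpha$ preserves this, and rewriting via the definition of $<x,\cdot>$ yields
\begin{align*}
<x,\mu_{\beta\alpha}^{C}>(y,q) = \beta\mu(xy,q)+\alpha \geq \beta\mu(xy^{2},q)+\alpha = <x,\mu_{\beta\alpha}^{C}>(y^{2},q),
\end{align*}
which is the completely semiprime condition.

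The main obstacle, such as it is, is recognizing the right factorization: once one observes that right regularity lets $xy^{2}$ appear as a left factor of $xy$, the right ideal property of $\mu$ finishes the job. It is worth noting that, in contrast with Theorem $4.18$, no commutativity of $S$ is required here, since right regularity already produces the relevant factorization on its own.
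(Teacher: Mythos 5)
Your proof is correct: part (i) is the standard computation showing that both the magnified translation and the extension preserve the right-ideal inequality, and part (ii) correctly uses right regularity to write $xy=(xy^{2})t$ and then applies the right-ideal property of $\mu$ to get $\mu(xy,q)\geq\mu(xy^{2},q)$, which survives scaling by $\beta$ and translating by $\alpha$. The paper offers no proof of this theorem (it is listed among results obtained ``by routine verification''), and your argument is exactly the verification being alluded to, including the observation that no commutativity is needed.
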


\begin{theorem}
Let $S$ be an intra-regular commutative semigroup, $Q$ be any
non-empty set and $\mu$ be a $Q$-fuzzy ideal of $S$. Then
$<x,\mu_{\beta\alpha}^{C}>$ is a $Q$-fuzzy completely semiprime
ideal of $S.$
\end{theorem}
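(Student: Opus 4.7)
The plan is to factor the argument through the earlier result Theorem~4.18, which already handles magnified translations of $Q$-fuzzy completely semiprime ideals in commutative semigroups. So it suffices to show that, under the present hypotheses, the given $Q$-fuzzy ideal $\mu$ is automatically a $Q$-fuzzy completely semiprime ideal of $S$; the desired conclusion will then follow by applying Theorem~4.18 to this strengthened $\mu$.

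To establish the upgrade, fix $a\in S$ and $q\in Q$. Intra-regularity supplies $u,v\in S$ with $a=ua^{2}v$. Since $\mu$ is a two-sided $Q$-fuzzy ideal of $S$, two successive applications of the ideal inequalities---once with $u$ as the outside left factor and once with $v$ as the outside right factor---give
\[
\mu(a,q)\;=\;\mu(ua^{2}v,q)\;\geq\;\mu(a^{2}v,q)\;\geq\;\mu(a^{2},q),
\]
which is precisely the $Q$-fuzzy completely semiprime condition for $\mu$. Note that commutativity is not actually needed for this upgrade step; it enters only in the final appeal to Theorem~4.18.

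With $\mu$ now known to be a $Q$-fuzzy completely semiprime ideal, Theorem~4.18 immediately yields that $<x,\mu_{\beta\alpha}^{C}>$ is a $Q$-fuzzy completely semiprime ideal of $S$ for every $x\in S$, completing the proof. The only real obstacle is thus the first step, namely recognizing that intra-regularity alone forces every $Q$-fuzzy ideal to be completely semiprime; this is the fuzzy analogue of the classical fact that ideals in an intra-regular semigroup are always semiprime, and once spotted it reduces the whole argument to one citation. An alternative (slightly longer) route is to bypass Theorem~4.18 and directly verify $<x,\mu_{\beta\alpha}^{C}>(y,q)\geq <x,\mu_{\beta\alpha}^{C}>(y^{2},q)$, which after unfolding the two definitions reduces to $\mu(xy,q)\geq \mu(xy^{2},q)$; writing $y=uy^{2}v$ and using commutativity to rearrange $xy=u(xy^{2})v$, the two-sided ideal property of $\mu$ closes the inequality.
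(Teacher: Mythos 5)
Your proposal is correct, and since the paper leaves Theorems 4.14--4.22 to ``routine verification,'' your reduction is exactly the intended argument: intra-regularity gives $a=ua^{2}v$, so the two-sided ideal inequalities yield $\mu(a,q)=\mu(ua^{2}v,q)\geq\mu(a^{2}v,q)\geq\mu(a^{2},q)$, upgrading $\mu$ to a $Q$-fuzzy completely semiprime ideal, after which Theorem 4.18 applies verbatim. Your observation that commutativity is only needed for the appeal to Theorem 4.18 (not for the upgrade step) is also accurate.
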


\begin{theorem}
Let $S$ be an archimedean commutative semigroup, $Q$ be any
non-empty set and $\mu$  be a $Q$-fuzzy completely semiprime ideal
$<x,\mu_{\beta\alpha}^{C}>$ of $S$ is a constant function.
\end{theorem}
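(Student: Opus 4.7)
The strategy is to first show that under the archimedean and completely semiprime hypotheses the underlying ideal $\mu$ itself must be constant in its $S$-variable (for each fixed $q\in Q$), and then transfer this constancy through the magnified translation and the extension operator.

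The first step is to establish that $\mu(a^{n},q)=\mu(a,q)$ for every $a\in S$, every $q\in Q$ and every positive integer $n$. The inequality $\mu(a^{n},q)\geq\mu(a,q)$ is immediate from the right ideal property applied to $a^{n}=a\cdot a^{n-1}$. For the reverse direction, iterating the completely semiprime condition $\mu(x,q)\geq\mu(x^{2},q)$ yields $\mu(a,q)\geq\mu(a^{2^{k}},q)$ for every $k\in N$; choosing $k$ with $2^{k}\geq n$ and writing $a^{2^{k}}=a^{n}\cdot a^{2^{k}-n}$, the right ideal property delivers $\mu(a^{2^{k}},q)\geq\mu(a^{n},q)$, and the two inequalities combine to give equality.

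The second step exploits the archimedean hypothesis: for arbitrary $a,b\in S$ there exists a positive integer $n$ with $a^{n}\in SbS$, say $a^{n}=sbt$ for some $s,t\in S$. The ideal property then gives $\mu(a^{n},q)\geq\mu(b,q)$, and combining with the first step yields $\mu(a,q)\geq\mu(b,q)$. Swapping the roles of $a$ and $b$ produces the reverse inequality, so $\mu(\cdot,q)$ is constant on $S$ for each fixed $q\in Q$. Consequently the magnified translation $\mu^{C}_{\beta\alpha}(\cdot,q)=\beta\mu(\cdot,q)+\alpha$ is constant on $S$, and hence $<x,\mu^{C}_{\beta\alpha}>(y,q)=\mu^{C}_{\beta\alpha}(xy,q)$ is independent of $y$, which is precisely the sense of \emph{constant function} used in Proposition $4.9$.

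I do not anticipate a genuine obstacle; the only point requiring care is the passage from the dyadic iteration $\mu(a,q)\geq\mu(a^{2^{k}},q)$ to arbitrary exponents $n$, which is cleanly handled by sandwiching $n$ beneath a suitable power of two and invoking the ideal property. Commutativity is not actually required for this argument, but the hypothesis is retained to match the statement.
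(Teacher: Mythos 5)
Your proof is correct. Note that the paper itself offers no argument for this statement: Theorems 4.14--4.22 are all dismissed with the single sentence ``We can obtain following results by routine verification,'' so there is nothing to compare against, and your write-up is in effect the only proof on the table. The argument you give is the standard one (it is essentially the proof from Xie's fuzzy ideal extension paper, transported to the $Q$-fuzzy, magnified-translation setting): the dyadic sandwich $\mu(a,q)\geq\mu(a^{2^{k}},q)\geq\mu(a^{n},q)\geq\mu(a,q)$ correctly combines Definition 3.6 with the right ideal property to give $\mu(a^{n},q)=\mu(a,q)$; the archimedean condition $a^{n}=sbt$ then yields $\mu(a,q)=\mu(a^{n},q)\geq\mu(bt,q)\geq\mu(b,q)$ via the two-sided ideal property, and symmetry forces $\mu(\cdot,q)$ to be constant; constancy clearly survives the affine map $u\mapsto\beta u+\alpha$ and the substitution $y\mapsto xy$, which is all that $<x,\mu_{\beta\alpha}^{C}>$ involves. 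Your reading of ``constant function'' (constant in the $S$-variable for each fixed $q$) matches the sense established in Proposition 4.9. Two cosmetic points: the step $\mu(a^{n},q)\geq\mu(a,q)$ via $a^{n}=a\cdot a^{n-1}$ only makes sense for $n\geq 2$ (for $n=1$ the claim is trivial), and your observation that commutativity of $S$ is never used is accurate --- the hypothesis is inherited from the surrounding theorems but plays no role here.
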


\begin{remark}
If we put $\beta=1($respectively $\alpha=0)$ in fuzzy magnified
translation then it reduces to fuzzy translation$($respectively
fuzzy multiplication$).$ Consequently analogues of Theorems
$4.14$-$4.22$ follow easily in fuzzy translation and fuzzy
multiplication.
\end{remark}


\end{document}